\theoremstyle{plain}
\newtheorem{theorem}{Theorem}[section]
\theoremstyle{remark}
\newtheorem{remark}[theorem]{Remark}
\newcommand{\E}{{\rm E}}
\newcommand{\Var}{{\rm Var}}
\newcommand{\net}{\chi^2_{{\rm net}}}
\newcommand{\netnewer}{\chi^2_{{\rm net},1}}
\newcommand{\netold}{\chi^2_{{\rm net},2}}
\newcommand{\tnet}{t^2_{{\rm net}}}
\begin{document}



\title{{\itshape A modified $\chi^2$-test for uplift models with applications in marketing performance measurement}}

\author{Ren{\'e} Michel$^{\ast}$\thanks{$^\ast$Corresponding author. Email: rene.michel@altran.com
\vspace{6pt}}, Igor Schnakenburg and Tobias von Martens\\\vspace{6pt}
{\em{Altran GmbH \& Co. KG,
Schillerstra\ss e 20,
    60313 Frankfurt, Germany}}\\\vspace{6pt}\received{v1.0 sent January 2014} }

\maketitle

\begin{abstract}
Uplift, essentially being the difference between two
probabilities, is a central number in marketing performance
measurement. A frequent question in applications is whether the
uplifts of two campaigns are significantly different. In this article we present
 a new $\chi^2$-statistic which allows
to answer this question by performing a statistical test. We show
that this statistic is asymptotically $\chi^2$-distributed and demonstrate its application in a real life
example.
By running simulations with this new  and alternative approaches, we find our suggested test to exhibit a better decisive power.

\begin{keywords}net scoring; uplift modelling; $\chi^2$-test; campaign management; performance measurement
\end{keywords}

\begin{classcode}62F03
 \end{classcode}

\end{abstract}

\section{Introduction}\label{sec:introduction}

Nowadays companies do not wait for customers to contact them and show their interest in a certain product. They rather try to initiate the contact themselves. This can be done via channels
such as mail, email or phone call. The company-initiated, personalised targeting of a fixed group of persons with a specific topic is usually referred to as a
campaign. These campaigns are part of the so called below-the-line marketing since only the targeted group is contacted.
An integral part of the campaign management process is to measure, whether a campaign was successful in order to be able to optimize the company's campaign portfolio.
Since customer behaviour is subject to random fluctuations,
statistical methods are needed to distinguish between ''true'' effects and success by chance.

One can measure campaign effectiveness only on the basis of differences between response rates in a target group and a structurally similar
group that has not been addressed with the campaign (control group). This difference is usually referred to as uplift, see~\cite{radcliffe:simpson:2008}.
Only in case it is positive, a campaign has generated added value and may therefore be regarded as effective, see~\cite{lo:2002}. An overview of the
statistics in campaign performance measurement is given in Chapter~8 of~\cite{falk:2014}.

In this paper we present a statistical method of comparing success of two campaigns, or more precisely, of comparing their uplifts. We do this by introducing $\net$, a modified version
of the classical $\chi^2$-statistic. With this new statistic and by showing that it asymptotically follows a $\chi^2$-distribution, we are able to
discover significant differences between the uplifts of two campaigns. We will also show its application to real data examples.

Up to now, only few studies cover this area of research. They primarily deal with the context of net scoring in which our statistic can also be used.
Net scoring is a generalisation of classical scoring procedures in that it does not predict the probability of a desired customer behaviour,
but instead predicts the increase of this probability through targeting.
Some algorithms used in this context apply statistics in order to compare the uplifts of two campaigns. The most prominent example
is~\cite{radcliffe:surry:2011}. Here, an $F$-statistic is introduced which tackles this problem. Another example is~\cite{michel:schnakenburg:martens:2013} where an ancestor
of our statistic $\net$ is introduced. Both papers primarily show the statistics and apply them as parts of complex algorithms but do not investigate their statistical
background. In this paper, we focus on the mathematical background. We construct a sound statistical testing procedure for the comparison of
campaign success which includes the computation of asymptotical distributions.

The alternatives to our approach are based on variance analysis. The first is the above mentioned $F$-statistic, the second would be the use of contrasts.
We will also introduce these methods and compare them to our new approach, mainly by simulations.

We begin by summarizing the basics of campaign performance measurement, especially control groups, in Section~\ref{sec:control groups}. After that, we present the
test scenarios which we investigate in this paper. Section~\ref{sec:modified chi square}, being the main section of our article, introduces the $\net$-statistic
and proves its asymptotic $\chi^2$-distribution. The alternative approaches are summarized in Section~\ref{sec:alternative}.
They are compared in Section~\ref{sec:simulation} by means of a simulation study.
In Section~\ref{sec:application}, we show an application of the statistics to a real data example.
A discussion and an outlook to yet unsolved problems in Section~\ref{sec:discussion} conclude this paper.

\section{Control groups}\label{sec:control groups}

In order to isolate the impact of campaigns, control groups are necessary.
A control group is a structurally identical group of customers that is not targeted and, hence, reflects customer behaviour without campaign impact.
In other words, the control group has to be representative of the target group. This is usually ensured by a random selection.
Differences in customer behaviour between target and control group may be represented by differences in the respective response rates, see Chapter~8 of~\cite{falk:2014}.

By campaigning, a company usually wishes to induce a certain response behaviour on the targeted people. If a person shows this behaviour within a certain time
period after the contact, this is regarded as a response.
The number of responses in relation to the number of targeted persons is called response rate and is our means of comparing target and control group

$$\mbox{response rate}=\frac{\mbox{number of responses}}{\mbox{number of targeted persons}}$$

 Classical examples of responses are product purchases, the acquisition of a new customer, appointments with sales personnel or the cancellation of business relationships (churn events, in which case
 a low rate is considered ''good'').

The response rate in the group of people that have been targeted does not suffice
to estimate the success of the campaign under consideration. Campaign effectiveness is measured by comparing it to the rate within the control group.
This difference is usually referred to as uplift.

\begin{eqnarray*}\mbox{uplift}=\mbox{response rate within target group}-\mbox{response rate within control group}\end{eqnarray*}

Only in the case it is positive, a campaign has generated added
 value and may therefore be regarded effective. The response rate within the control group is often referred to as random noise.

The uplift multiplied by the size of the target group, describes the additional responses gained by the campaign and thus the net impact of that campaign.
Examples are additional product sales, additional customers won, additional appointments made or prevented churn events.

This method of measuring the campaign effect is subject to random fluctuations. One classical problem is the question of whether the uplift
is different from $0$ only by random fluctuations or if a ''true'' impact is present.
This problem can be solved by Fisher's exact test or the classical $\chi^2$-test for homogenity for the comparison of response rates within target and control group,
see Sections~4.1 and~4.2 of~\cite{falk:2003} and Section~8.3 of~\cite{falk:2014}. Typically, the null hypothesis assumes no impact, i.e. uplift $=0$.

The problem we wish to discuss in this paper, however, is more complicated. We assume that we have two campaigns and we want to compare their success to each other,
i.e. we want to know if their uplifts are significantly different from each other.

\section{Test scenarios}\label{sec:test scenarios}

In describing our test scenarios another rate will play an important role. Since the validity of our measuring method depends on the target and control groups sizes
we define the target-control rate as follows:

$$\mbox{target-control rate}=\frac{\mbox{number of persons in target group}}{\mbox{number of persons in control group}}$$

In Section~\ref{sec:modified chi square}, we will present our test statistic. But first we will explain the
two scenarios to which this statistic can be applied.

\begin{itemize}
\item Scenario 1:\\
Imagine two different campaigns which should be compared regarding their uplift. In order to provide useful results in the marketing context, they should be comparable in some way, e.g.
have been carried out at roughly the same time or have advertised a similar product.

\item Scenario 2:\\
Imagine a campaign with target and control group. Within this campaign there are two different groups of interest,
for example men and women or people below and above the age of $40$. We are interested in whether the uplifts within both subgroups differ significantly from each other.
\end{itemize}

Mathematically, one could argue that Scenario 2 is part of Scenario 1. This is true. Assuming a representative control group of the main campaign, the
splitting criterion should separate target and control group in an even way. Especially the target-control rate in both subgroups should roughly be equal. In Scenario 1, we make no assumption
on the target-control rate. Thus, the case of equal target-control rate in both campaigns is a special case and hence included in Scenario~1. However, we distinguish between these two scenarios, since they
represent two different cases from the perspective of campaign management. In the first case, ''independent'' campaigns are compared, in the latter case
only one campaign is subject to the investigation and is artificially split. Scenario 2 is mainly the way net scoring is done when one uses scoring procedures
like decision trees.

\section{The $\net$-statistic}\label{sec:modified chi square}

In order to introduce our new $\net$-statistic, we firstly formalise the setup.
We start by assuming two campaigns, each containing target and control group either resulting from one campaign, split into two subgroups by some criterion, or two separate campaigns).
In order to ease the notation, we refer to those individual campaigns from now on
as subgroups (still covering both Scenarios~1 and~2). If we unify target and control groups, we assume
that we have $n$ observations in the unified target group and $k$ observations in the control unified group.  We further assume that
subgroup~1 contains $n_1$ target observations and $k_1$
control observations, subgroup~2 $n_2$ and $k_2$ observations respectively. By $a_{G,SG}$, we denote the number of responses in the respective group where $G=T, C$ for target
and control group and $SG=1,2$ or missing for the first, the second subgroup or the overall group. In tabular form, this reads as
\begin{equation} \label{tab:target_group}\begin{array}{l|l|l|l}
\mbox{target}\quad &\quad \mbox{response}\quad &\quad \mbox{no response}\quad &\quad \mbox{total}\quad \\
\hline
\mbox{subgroup~1}\quad &\quad a_{T,1}\quad &\quad n_1-a_{T,1}\quad &\quad n_1\quad\\
\hline
\mbox{subgroup~2}\quad &\quad a_{T,2}\quad &\quad n_2-a_{T,2}\quad &\quad n_2\quad\\
\hline
\mbox{total}\quad &\quad a_{T}\quad &\quad n-a_{T}\quad &\quad n\quad\\
 \end{array}
 \end{equation}

for the target group and for the control group

 \begin{equation} \label{tab:control_group}\begin{array}{l|l|l|l}
\mbox{control}\quad &\quad \mbox{response}\quad &\quad \mbox{no response}\quad &\quad \mbox{total}\quad\\
\hline
\mbox{subgroup~1}\quad &\quad a_{C,1}\quad &\quad k_1-a_{C,1}\quad &\quad k_1\quad\\
\hline
\mbox{subgroup~2}\quad &\quad a_{C,2}\quad &\quad k_2-a_{C,2}\quad &\quad k_2\quad\\
\hline
\mbox{total}\quad &\quad a_{C}\quad &\quad k-a_{C}\quad & \quad k\quad\\
\end{array}
 \end{equation}

We now define the additional responses (or uplift-based responses) of the subgroups of our campaign as

\begin{equation}\label{eq:additonal_sales}
l_i=a_{T,i}-\frac{n_i}{k_i}a_{C,i},\qquad i=1,2
\end{equation}
 by simply scaling the responses of the control group to the
size of the target group. Note that this definition cannot be used without index, i.e. it cannot be used
 to define the uplift-based response of the overall campaign, since we do not assume equal target-control rates.
Unequal target control rates result in a unified control group being structurally different from the unified target group and, thus, the overall control group not
representative of the overall target group. For now, we define the uplift-based responses as the sum of the individual
uplift-based responses, i.e., $l:= l_1+l_2$. In tabular form this reads

 \begin{equation*} \label{tab:additional_sales}\begin{array}{l|l|l|l}
 \mbox{uplift} \quad &\quad \mbox{additional responses}\quad &\quad \mbox{no additional responses}\quad &\quad \mbox{total}\quad\\
\hline
\mbox{subgroup~1}\quad &\quad l_{1}\quad &\quad n_1-l_{1}\quad &\quad n_1\quad\\
\hline
\mbox{subgroup~2}\quad &\quad l_{2}\quad &\quad n_2-l_{2}\quad &\quad n_2\quad\\
\hline
\mbox{total}\quad &\quad l\quad &\quad n-l\quad &\quad n\quad\\
 \end{array}
 \end{equation*}

We next make the following model assumption: $a_{G,SG}$, $G=T,C$, $SG=1,2$ follow a binomial distribution with a response probability $0 < p_{G,SG} <1$.
We also assume that the responses
in the four target and control subgroups are independent of each other.

In the following, we want to investigate the hypothesis
\begin{equation}\label{eq:null_hypothesis}
p_{T,1}-p_{C,1} = p_{T,2}-p_{C,2}
\end{equation}
i.e. that the uplift is the same in both subgroups.

For that, we compute the expectation and the variance of the uplift-based responses by the known expectation and variance formula for a binomial distributed random variable, see Section~3.2
of~\cite{schinazi:2012}
and the standard calculation rules for expectation and variance, see Sections~2.3 and~2.4 of~\cite{schinazi:2012}

\begin{equation}\label{eq:expectation_li}
\E(l_i)=n_ip_{T,i}-n_ip_{C,i}=n_i(p_{T,i}-p_{C,i}),\qquad i=1,2
\end{equation}
and
\begin{equation}\label{eq:variance_li}
\Var(l_i)=n_ip_{T,i}(1-p_{T,i})+\frac{n_i^2}{k_i} p_{C,i}(1-p_{C,i}),\qquad i=1,2
\end{equation}

We estimate $p_{T,i}$ by $\hat p_{T,i}:=\frac{a_{T,i}}{n_i}$ and $p_{C,i}$ by $\hat p_{C,i}:=\frac{a_{C,i}}{k_i}$, $i = 1,2$ which are unbiased estimators and
converge to the probabilties by the law of large numbers, see Section~4.1 of~\cite{schinazi:2012}.

Next, we estimate the response propabilities of the unified target and control groups.
By elementary stochastic considerations we find the probability $p_T$ of the unified target group to be

\begin{eqnarray}\label{eq:p_T_definition}
p_{T} &=& \frac{n_1}n p_{T,1}+\frac{n_2}n p_{T,2}
 \end{eqnarray}
and it can be estimated by
\begin{eqnarray}\label{eq:hat_p_T_definition}
\hat p_{T} &:=& \frac{a_T}n = \frac{n_1}n\hat p_{T,1}+\frac{n_2}n\hat p_{T,2}
 \end{eqnarray}

For the control group, we could make an analogous defintion which however leads to biased results in Scenario~1.
An analogous definition for the control group would however lead to biased results considering the setup of Scenario~1 where we cannot make the
assumption of equal target-control group rates. We therefore define

\begin{eqnarray}
\label{eq:p_C_definition}
p_{C} &:=& \frac{n_1}n p_{C,1}+\frac{n_2}n p_{C,2},
 \end{eqnarray}
where the ''natural'' weighting factors $k$, $k_1$ and $k_2$ have been replaced by $n$, $n_1$ and $n_2$.
An unbiased estimator is

\begin{eqnarray}
\label{eq:hat_p_C_definition}
\hat p_{C} &:=& \frac{n_1}n\hat p_{C,1}+\frac{n_2}n\hat p_{C,2}
 \end{eqnarray}

 This definition of $p_C$ by the target group weighted control group response probabilities is crucial, as we will explain later.
 A discussion of the case $\hat p_{C}:=\frac{a_{C}}{k}$ will be given in Remark~\ref{rem:newer_statistic}.

We put

\begin{equation}\label{eq:est_expectation_li}
\hat e_i=n_i\left(\hat p_{T}-\hat p_{C}\right)
\end{equation}
as an estimator of the expectation of the additional responses in each individual campaign which is only valid under the null hypothesis~(\ref{eq:null_hypothesis}) (note the missing indices on the
right hand side).
Additionally, we define

\begin{equation}\label{eq:est_variance_li}
\hat v_i=n_i\hat p_{T,i}(1-\hat p_{T,i})+\frac{n_i^2}{k_i}\hat p_{C,i}(1-\hat p_{C,i})
\end{equation}
as an unbiased estimator of the variances of $l_i$.

In order to test the null hypothesis~(\ref{eq:null_hypothesis}), we define the net $\chi^2$-statistic

\begin{equation}\label{eq:net_chi_square}
\net:=\left(\frac{(l_1-\hat e_1)^2}{\hat v_1}+\frac{(l_2-\hat e_2)^2}{\hat v_2}\right)\cdot\frac{1}{\hat w_n\hat f_n}
\end{equation}
with the norming terms

\begin{eqnarray}
\nonumber \hat w_n &=& \frac{n_2}n\left[\hat p_{T,1}(1-\hat p_{T,1})+\frac{n_1}{k_1}\hat p_{C,1}(1-\hat p_{C,1})\right]\\
 \label{eq:hatw}                 && +  \frac{n_1}n\left[\hat p_{T,2}(1-\hat p_{T,2})+\frac{n_2}{k_2}\hat p_{C,2}(1-\hat p_{C,2})\right]
\\
 \hat f_n &=&\frac{\frac{n_2}{n}}{\hat p_{T,1}(1-\hat p_{T,1})+\frac{n_1}{k_1}\hat p_{C,1}(1-\hat p_{C,1})}
\label{eq:hatf}          +                  \frac{\frac{n_1}{n}}{\hat p_{T,2}(1-\hat p_{T,2})+\frac{n_2}{k_2}\hat p_{C,2}(1-\hat p_{C,2})}
\end{eqnarray}

Since $\hat e_i$ estimate the expectation of $l_i$ within the unified subgroups (and hence their sum the additional responses for the the overall group),
 it is easy heuristically to see with the law of large numbers that $\net$ will be close to~$0$ if both subgroups have the same uplift:

\begin{eqnarray*}
l_1-\hat e_1 &=& a_{T,1}-\frac{n_1}{k_1}a_{C,1}-n_1\left(\hat p_T-\hat p_C\right)\\
             &\approx &n_1 p_{T,1}-{n_1}p_{C,1}-n_1\left( \frac{n_1}n p_{T,1}+\frac{n_2}n p_{T,2}-\frac{n_1}n p_{C,1}-\frac{n_2}n p_{C,2}\right)\\
             &=&\left(n_1-\frac{n_1^2}{n}\right) p_{T,1}-\left(n_1-\frac{n_1^2}{n}\right)p_{C,1}-\frac{n_1n_2}np_{T,2}+\frac{n_1n_2}np_{C,2}\\
             &=&\frac{n_1n_2}n\left[\left(p_{T,1}-p_{C,1}\right)-\left(p_{T,2}-p_{C,2}\right)\right]\stackrel{(\ref{eq:null_hypothesis})}{=}0
\end{eqnarray*}
Analogously, $l_2-\hat e_2\approx 0$ under the null hypothesis. The terms $\hat v_i$, $\hat w_n$ and $\hat f_n$ ensure the $\chi^2$-distribution
of $\net$ when the null hypothesis holds.

Note from the last line that $\net$ scales with a factor of $\frac{n_1n_2}n$ when the uplifts in the subgroups are different.
In order to be able to compute $p$-values, we now want to prove that under the null hypothesis~(\ref{eq:null_hypothesis}), $\net$ follows asymptotically a $\chi^2$-distribution with one degree
of freedom. The proof follows the principles of~\cite{cramer:1945}, pp.~446, for the special case of a $2\times 2$ contingency table.

In order to show this convergence, we have to introduce some regularity conditions. Let $n_1$, $n_2$, $k_1$, $k_2$ and $k$ depend on $n$. Also suppose that
\begin{eqnarray}
\label{eq:regularity_condition n1n}\lim_{n\to\infty} \frac {n_1}n = s > 0,\quad \lim_{n\to\infty} \frac {n_1}{k_1} = t_1 > 0,\quad \lim_{n\to\infty} \frac {n_2}{k_2} = t_2 > 0
\end{eqnarray}
which ensure that the group sizes increase in a ''regular'' manner, which is a common assumption in such cases. If $t_1=t_2$,
representativity of the unified control group to the target group in the limit is ensured. This is the setup for using $\net$ in
Scenario~2 of Section~\ref{sec:test scenarios} (splitting one large group into subgroups).

Assumptions~(\ref{eq:regularity_condition n1n}) imply the following convergence
\begin{eqnarray}
\label{eq:regularity_condition n2n}\lim_{n\to\infty} \frac {n_2}n &=& \lim_{n\to\infty} \frac {n-n_1}n = 1-\lim_{n\to\infty} \frac {n_1}n = 1-s
\end{eqnarray}

Regarding the variances, we find the following implications
\begin{eqnarray}
\nonumber \lim_{n\to\infty}\frac{\Var(l_i)}{\hat v_i} &=& \lim_{n\to\infty}\frac{n_ip_{T,i}(1-p_{T,i})+\frac{n_i^2}{k_i} p_{C,i}(1-p_{C,i})}{n_i\hat p_{T,i}(1-\hat p_{T,i})+\frac{n_i^2}{k_i}\hat p_{C,i}(1-\hat p_{C,i})}\\
\label{eq:regularity_condition variance}  &=& \lim_{n\to\infty}\frac{p_{T,i}(1-p_{T,i})+\frac{n_i}{k_i} p_{C,i}(1-p_{C,i})}{\hat p_{T,i}(1-\hat p_{T,i})+\frac{n_i}{k_i}\hat p_{C,i}(1-\hat p_{C,i})}=1
\end{eqnarray}
by the above convergence of the estimators
\begin{eqnarray}
\label{eq:ptop}\lim_{n\to\infty}\hat p_{G,SG} = p_{G,SG},\qquad G=T,C,\qquad SG=1,2
\end{eqnarray}
 and regularity conditions~(\ref{eq:regularity_condition n1n}).

Remark that with analogous arguments as in~(\ref{eq:regularity_condition variance}) one gets
\begin{eqnarray}
\label{eq:regularity_condition variance_rez} \lim_{n\to\infty}\frac{\frac 1{\hat v_1}+\frac 1{\hat v_2}}{\frac 1{\Var(l_1)} +\frac 1{\Var(l_2)}}  =1
\end{eqnarray}
which we need later. Finally, we get
\begin{eqnarray}
\nonumber \lim_{n\to\infty}\hat w_n &=& (1-s)\left[p_{T,1}(1-p_{T,1})+t_1 p_{C,1}(1- p_{C,1})\right]\\
\label{eq:convergence_hatw}                       && +  s\left[ p_{T,2}(1-p_{T,2})+t_2 p_{C,2}(1- p_{C,2})\right] =: w
\\
\nonumber \lim_{n\to\infty}\hat f_n &=&
                            \frac{1-s}{p_{T,1}(1-p_{T,1})+t_1p_{C,1}(1-p_{C,1})}\\
\label{eq:convergence_hatf}                            && +\frac{s}{p_{T,2}(1-p_{T,2})+t_2p_{C,2}(1-p_{C,2})} =: f
\end{eqnarray}

We can now state our central theorem and show its proof.

\begin{theorem}\label{theo:net_chi_square}
Under the null hypothesis~(\ref{eq:null_hypothesis}), the regularity conditions~(\ref{eq:regularity_condition n1n})
and $p_{T,1},p_{T,2},p_{C,1},p_{C,2}\notin \{0,1\}$,
$\net$ follows for $n\to\infty$ a $\chi^2$-distribution with one degree of freedom.
\end{theorem}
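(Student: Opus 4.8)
The plan is to first collapse $\net$ into a single squared quantity. Writing $\hat u_i := \hat p_{T,i}-\hat p_{C,i}$ for the empirical uplift of subgroup $i$, note that $l_i = n_i\hat u_i$ holds exactly by (\ref{eq:additonal_sales}), and that $\hat p_T-\hat p_C = \frac{n_1}{n}\hat u_1 + \frac{n_2}{n}\hat u_2$ by (\ref{eq:hat_p_T_definition}) and (\ref{eq:hat_p_C_definition}). Substituting these into $l_i-\hat e_i$ reproduces the heuristic display preceding the theorem, but now as an \emph{exact} identity in the estimators:
\[
l_1-\hat e_1 = \frac{n_1 n_2}{n}(\hat u_1-\hat u_2),\qquad l_2-\hat e_2 = -\frac{n_1 n_2}{n}(\hat u_1-\hat u_2).
\]
Since the two numerators agree up to sign, setting $D := \hat u_1-\hat u_2$ gives
\[
\net = \left(\frac{n_1 n_2}{n}\right)^2 D^2\left(\frac{1}{\hat v_1}+\frac{1}{\hat v_2}\right)\frac{1}{\hat w_n\hat f_n}.
\]

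Next I would establish the asymptotic normality of $D$. Each $\hat p_{G,SG}$ is a binomial proportion, so by the de Moivre--Laplace / central limit theorem the centred quantities $\sqrt{n_i}(\hat p_{T,i}-p_{T,i})$ and $\sqrt{k_i}(\hat p_{C,i}-p_{C,i})$ are asymptotically normal; combined with the regularity conditions (\ref{eq:regularity_condition n1n}) this makes each $\hat u_i$ asymptotically normal with mean $p_{T,i}-p_{C,i}$ and variance $\sigma_i^2 := \Var(\hat u_i) = \Var(l_i)/n_i^2$. Under the null hypothesis (\ref{eq:null_hypothesis}) the two means coincide, hence $\E D = 0$, and since the four target/control subgroups are independent, $D/\sqrt{\sigma_1^2+\sigma_2^2}\to N(0,1)$, so that $D^2/(\sigma_1^2+\sigma_2^2)\to\chi^2$ with one degree of freedom. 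The assumption $p_{G,SG}\notin\{0,1\}$ guarantees $\sigma_i^2>0$, so this normalisation is legitimate.

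It then remains to show that the deterministic prefactor multiplying $D^2$ is asymptotically $1/(\sigma_1^2+\sigma_2^2)$. By Slutsky's theorem and the convergences (\ref{eq:regularity_condition variance_rez}), (\ref{eq:convergence_hatw}) and (\ref{eq:convergence_hatf}), I may replace $\frac{1}{\hat v_1}+\frac{1}{\hat v_2}$ by $\frac{1}{\Var(l_1)}+\frac{1}{\Var(l_2)}$ and $\hat w_n\hat f_n$ by $wf$, reducing the claim to verifying
\[
\left(\frac{n_1 n_2}{n}\right)^2\left(\frac{1}{\Var(l_1)}+\frac{1}{\Var(l_2)}\right)\frac{1}{wf}\,(\sigma_1^2+\sigma_2^2)\to 1.
\]
Writing $\Var(l_i)=n_i b_i$ and $\sigma_i^2=b_i/n_i$ with $b_i := p_{T,i}(1-p_{T,i})+\frac{n_i}{k_i}p_{C,i}(1-p_{C,i})\to\beta_i$, I would expand the left-hand factor (excluding $1/wf$) and pass to the limit to obtain $(1-s)^2+s^2+s(1-s)\!\left(\frac{\beta_1}{\beta_2}+\frac{\beta_2}{\beta_1}\right)$, which is exactly the product $wf$ read off from (\ref{eq:convergence_hatw})--(\ref{eq:convergence_hatf}). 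The ratio therefore tends to $1$, and a final application of Slutsky's theorem gives $\net\to\chi^2$ with one degree of freedom.

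I expect this last verification to be the main obstacle. The reduction in the first step is a purely algebraic identity and the normality in the second step is a textbook consequence of the CLT; but matching the prefactor to $1/\Var(D)$ requires carrying the four group-variance terms accurately through the three products $\frac{1}{\Var(l_1)}+\frac{1}{\Var(l_2)}$, $\sigma_1^2+\sigma_2^2$ and $wf$, and observing that the $wf$ factor cancels exactly. This cancellation is the whole point of the norming terms $\hat w_n$ and $\hat f_n$: they are defined precisely so that $\net/\bigl(D^2/\Var(D)\bigr)$ has limit $1$, and without them the statistic would converge to $wf\cdot\chi^2$ rather than to $\chi^2$.
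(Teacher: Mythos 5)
Your proof is correct, and its skeleton is the same as the paper's: the exact antisymmetry $l_2-\hat e_2=-(l_1-\hat e_1)$ (your identity $l_1-\hat e_1=\frac{n_1n_2}{n}(\hat u_1-\hat u_2)$ is the paper's equation~(\ref{eq:l2minuse2}) in a cleaner parameterization), a central limit theorem for the four independent binomial counts under the null hypothesis~(\ref{eq:null_hypothesis}), and Slutsky arguments resting on~(\ref{eq:regularity_condition variance_rez}), (\ref{eq:convergence_hatw}) and~(\ref{eq:convergence_hatf}). The one substantive difference is the bookkeeping in the final normalization, which is exactly the step you flagged as the main obstacle. You standardize $D=\hat u_1-\hat u_2$ by its exact standard deviation (with your $\sigma_i^2=\Var(l_i)/n_i^2$) and must then verify that the leftover deterministic factor converges to $wf$, which you do by expanding everything and recognizing $wf=(1-s)^2+s^2+s(1-s)\bigl(\frac{\beta_1}{\beta_2}+\frac{\beta_2}{\beta_1}\bigr)$. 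The computation is right, but the expansion is unnecessary, because your prefactor splits as a product:
\[
\left(\frac{n_1n_2}{n}\right)^2\left(\frac 1{\Var(l_1)}+\frac 1{\Var(l_2)}\right)\left(\sigma_1^2+\sigma_2^2\right)
=\left[\frac{n_1n_2}{n}\left(\frac 1{\Var(l_1)}+\frac 1{\Var(l_2)}\right)\right]\cdot\left[\frac{n_1n_2}{n}\left(\sigma_1^2+\sigma_2^2\right)\right],
\]
and the two brackets are precisely $\hat f_n$ and $\hat w_n$ with population quantities in place of the estimators, so they converge to $f$ and $w$ \emph{separately}; the product $wf$ never has to be computed. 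This is how the paper organizes its proof: it standardizes the CLT term by the limiting standard deviation $\sqrt w$ rather than the exact one (the two are interchangeable since $\frac{n_1n_2}{n}(\sigma_1^2+\sigma_2^2)\to w$), identifies $\frac{n_1n_2}{n}\bigl(\frac 1{\Var(l_1)}+\frac 1{\Var(l_2)}\bigr)\to f$, and then writes $\net$ as the squared CLT term times three ratios each tending to one. What your route buys is transparency about why the norming terms exist at all: $\hat w_n\hat f_n$ is exactly the asymptotic ratio between the naive quantity $\bigl(\frac{n_1n_2}{n}\bigr)^2D^2\bigl(\frac 1{\hat v_1}+\frac 1{\hat v_2}\bigr)$ and the pivot $D^2/\Var(D)$, as your closing remark says. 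What the paper's factorization buys is economy---the cancellation is built into the definitions of $w$ and $f$ and is never verified by expansion. One last point of rigor: since $\sigma_1^2,\sigma_2^2$ depend on $n$, the statement $D/\sqrt{\sigma_1^2+\sigma_2^2}\to N(0,1)$ is a triangular-array claim; it should be (and in the paper is) justified by writing the numerator as a linear combination of the four standardized binomial variables whose coefficients converge, using~(\ref{eq:regularity_condition n1n}) and $p_{T,1},p_{T,2},p_{C,1},p_{C,2}\notin\{0,1\}$. Your appeal to asymptotic normality of each $\hat u_i$ plus independence is that same argument in compressed form, so this is a presentational point, not a gap.
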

\begin{proof}

Using the definitions of $l_1$, $l_2$, $\hat e_1$ and $\hat e_2$ one can see by elementary calculations that
\begin{eqnarray}
\label{eq:l2minuse2}l_2-\hat e_2 =  \frac{n_1}na_{T,2} - \frac{n_1n_2}{nk_2}a_{C,2} -\frac{n_2}{n}a_{T,1} +\frac{n_1n_2}{nk_1}a_{C,1}
=-\left(l_1-\hat e_1\right)
\end{eqnarray}
holds. Thus, putting this result into~(\ref{eq:net_chi_square}) we have (remark the index $1$ in the second term)
\begin{eqnarray}\label{eq:proof_first_step}
\hat f_n\hat w_n\net = \frac{(l_1-\hat e_1)^2}{\hat v_1} + \frac{(l_1-\hat e_1)^2}{\hat v_2}
\end{eqnarray}

In a first step we will show
\begin{eqnarray}\label{eq:proof_second_step}
\sqrt{\frac{n}{n_1n_2}}(l_1-\hat e_1)
\end{eqnarray}
is asymptotically normally distributed with a mean of $0$ and variance $w$ from~(\ref{eq:convergence_hatw}).
In the second step we will put the pieces together and prove that $\net$ follows a $\chi^2$-distribution with one degree of freedom.

In order to see that~(\ref{eq:proof_second_step}) is normally distributed, we add zeros to~(\ref{eq:l2minuse2}) and find the form
\begin{eqnarray}
\nonumber l_1-\hat e_1
 &=&\frac{n_2}n\left(a_{T,1}-n_1p_{T,1}\right)-\frac{n_1n_2}{nk_1}\left(a_{C,1}-k_1p_{C,1}\right)
 -\frac{n_1}n\left(a_{T,2}-n_2p_{T,2}\right) \\
\label{eq:l1minuse1} &&+\frac{n_1n_2}{nk_2}\left(a_{C,2}-k_2p_{C,2}\right)
 + \frac{n_1n_2}n\underbrace{\left[(p_{T,1}-p_{C,1})-(p_{T,2}-p_{C,2})\right]}_{\stackrel{(\ref{eq:null_hypothesis})}{=}0}
\end{eqnarray}
which simplifies due to the null hypothesis~(\ref{eq:null_hypothesis}) of equal uplifts in both subgroups.

Thus,
\begin{eqnarray}
\nonumber \sqrt{\frac n{n_1n_2}}\left(l_1-\hat e_1\right)
 &=&\underbrace{\frac{n_2}n\sqrt{\frac n{n_1n_2}}\sqrt{n_1p_{T,1}\left(1-p_{T,1}\right)}\underbrace{\frac{a_{T,1}-n_1p_{T,1}}{\sqrt{n_1p_{T,1}\left(1-p_{T,1}\right)}}}_{=: F_{T,1}}}_{=: H_{T,1}}\\
\nonumber &&-\underbrace{\frac{n_1n_2}{nk_1}\sqrt{\frac n{n_1n_2}}\sqrt{k_1p_{C,1}\left(1-p_{C,1}\right)}\underbrace{\frac{a_{C,1}-k_1p_{C,1}}{\sqrt{k_1p_{C,1}\left(1-p_{C,1}\right)}}}_{=: F_{C,1}}}_{=: H_{C,1}}\\
\nonumber &&-\underbrace{\frac{n_1}n\sqrt{\frac n{n_1n_2}}\sqrt{n_2p_{T,2}\left(1-p_{T,2}\right)}\underbrace{\frac{a_{T,2}-n_2p_{T,2}}{\sqrt{n_2p_{T,2}\left(1-p_{T,2}\right)}}}_{=: F_{T,2}}}_{=: H_{T,2}}\\
\nonumber &&+\underbrace{\frac{n_1n_2}{nk_2}\sqrt{\frac n{n_1n_2}}\sqrt{k_2p_{C,2}\left(1-p_{C,2}\right)}\underbrace{\frac{a_{C,2}-k_2p_{C,2}}{\sqrt{k_2p_{C,2}\left(1-p_{C,2}\right)}}}_{=: F_{C,2}}}_{=: H_{C,2}}
\end{eqnarray}
Since $a_{G,SG}$ follow a binomial distribution, the $F_{G,SG}$ converge to a standard normal distribution by the Central Limit theorem, see Section~4.2 of~\cite{schinazi:2012}.
With the convergencies
\begin{eqnarray*}
\frac{n_2}n\sqrt{\frac n{n_1n_2}}\sqrt{n_1} = \sqrt{\frac{n_2}{n}} &\to_{n_\to\infty} &\sqrt{1-s}\\
\frac{n_1n_2}{nk_1}\sqrt{\frac n{n_1n_2}}\sqrt{k_1} = \sqrt{\frac{n_1}{k_1}}\cdot \sqrt{\frac {n_2}n} &\to_{n_\to\infty} &\sqrt{t_1(1-s)}\\
\frac{n_1}n\sqrt{\frac n{n_1n_2}}\sqrt{n_2} = \sqrt{\frac{n_1}{n}} &\to_{n_\to\infty} &\sqrt{s}\\
\frac{n_1n_2}{nk_2}\sqrt{\frac n{n_1n_2}}\sqrt{k_2} = \sqrt{\frac{n_2}{k_2}}\sqrt{\frac{n_1}{n}} &\to_{n_\to\infty} &=\sqrt{t_2s}
\end{eqnarray*}
which follow from regularity conditions (\ref{eq:regularity_condition n1n}) and (\ref{eq:regularity_condition n2n})
we see that
\begin{eqnarray*}
H_{T,1}&\to_{n\to\infty}& N\left(0,(1-s)p_{T,1}\left(1-p_{T,1}\right)\right)\\
H_{C,1}&\to_{n\to\infty}& N\left(0,t_1(1-s)p_{C,1}\left(1-p_{C,1}\right)\right)\\
H_{T,2}&\to_{n\to\infty}& N\left(0,sp_{T,2}\left(1-p_{T,2}\right)\right)\\
H_{C,2}&\to_{n\to\infty}& N\left(0,t_2sp_{C,2}\left(1-p_{C,2}\right)\right)\\
\end{eqnarray*}
where $N(\mu,\sigma^2)$ denotes the normal distribution with mean $\mu$ and variance $\sigma^2$.
Since
\begin{eqnarray*}
(1-s)\left[p_{T,1}\left(1-p_{T,1}\right) +   t_1p_{C,1}\left(1-p_{C,1}\right)\right] +
 s\left[p_{T,2}\left(1-p_{T,2}\right) + t_2p_{C,2}\left(1-p_{C,2}\right)\right]=w,
\end{eqnarray*}
the independence of the $H_{G,SG}$ (which follows from the independence of $a_{G,SG}$) and the convolution theorem for the normal distribution (i.e. the sum of two independent normal random variables is also normal), we conclude that
$\sqrt{\frac n{n_1n_2}}\left(l_1-\hat e_1\right)$ converges under the null hypothesis~(\ref{eq:null_hypothesis}) to a normal distribution with a mean of $0$ and variance $w$,
which finalizes the first step of our proof.

We begin the second step by computing the following expression under the definition of the variances~(\ref{eq:variance_li}) and~(\ref{eq:hatf})
\begin{eqnarray}
\nonumber \lefteqn{\frac{n_1n_2}{n}\cdot\left(\frac 1{\Var(l_1)}+\frac 1{\Var(l_2)}\right)}\\
\nonumber &=&
\frac {\frac{n_2}n}{p_{T,1}\left(1-p_{T,1}\right)+\frac{n_1}{k_1}p_{C,1}\left(1-p_{C,1}\right)}+
\frac {\frac{n_1}n}{p_{T,2}\left(1-p_{T,2}\right)+\frac{n_2}{k_2}p_{C,2}\left(1-p_{C,2}\right)}
\\
\nonumber &\to_{n\to\infty}&
\frac {1-s}{p_{T,1}\left(1-p_{T,1}\right)+t_1p_{C,1}\left(1-p_{C,1}\right)}+
\frac {s}{p_{T,2}\left(1-p_{T,2}\right)+t_2p_{C,2}\left(1-p_{C,2}\right)}
\label{eq:convergence_reciporcals} =f
\end{eqnarray}
The convergence follows again by the regularity conditions (\ref{eq:regularity_condition n1n}) and (\ref{eq:regularity_condition n2n}).

Putting all the pieces together, we get the following representation of $\net$ from~(\ref{eq:proof_first_step}):

\begin{eqnarray*}
\net &=& \left((l_1-\hat e_1)^2\left(\frac 1{\hat v_1}+\frac 1{\hat v_2}\right)\right)\frac{1}{\hat w_n\hat f_n}\\
     &=& \left(\underbrace{\underbrace{\sqrt{\frac n{n_1n_2}}(l_1-\hat e_1)}_{\to N(0,w)}\cdot\frac 1{\sqrt w}}_{\to N(0,1)}\right)^2
       \cdot\underbrace{\frac{\frac{n_1n_2}n\cdot\left(\frac 1{\Var(l_1)}+\frac 1{\Var(l_2)}\right)}{\hat f_n}}_{\to f/f=1}
     \cdot\underbrace{\frac{w}{\hat w_n}}_{\to 1}
        \cdot\underbrace{\frac{\frac 1{\hat v_1}+\frac 1{\hat v_2}}{\frac 1{\Var(l_1)}+\frac 1{\Var(l_2)}}}_{\to 1} \\
\end{eqnarray*}

The convergencies follow by~(\ref{eq:regularity_condition variance_rez}) to~(\ref{eq:convergence_hatf}) and~(\ref{eq:proof_second_step}).
Thus, $\net$ is asymptotically $\chi^2$-distributed with one degree of freedom which is the distribution of the square of a standard normal distributed random variable.

\end{proof}

\begin{remark}\label{rem:newer_statistic}
In the case of Scenario~2 in Section~\ref{sec:test scenarios} (the unified target and control groups are representative due to equal target-control rates of the
subgroups) the ''natural'' defintion of
$\hat p_C=\frac{a_C}{k}$ can be used to define the $\net$-statistic. It then also follows asymptotically a $\chi^2$-distribution with one degree of freedom,
however, the regularity conditions~(\ref{eq:regularity_condition n1n})  have to be expanded by the assumption that the
convergencies are superlinear, i.e. $\lim_{n\to\infty}n\left(\frac{n_1}n-s\right)=0$ and analogously for $\frac{n_1}{k_1}$ and $\frac{n_2}{k_2}$. Also, the latter
terms converge to the same number $t_1=t_2=:t$. The proof itself becomes more complicated since terms which cancel in the proof of Theorem~\ref{theo:net_chi_square} in equations like~(\ref{eq:l2minuse2}) and
(\ref{eq:l1minuse1}) only vanish in the limit. In the following we note this version as $\netnewer$.
\end{remark}

\begin{remark}\label{rem:old_statistic}
In~\cite{michel:schnakenburg:martens:2013} another slightly different version of $\net$ for Scenario~2, based on $\netnewer$ from Remark~\ref{rem:newer_statistic} is presented. It differs since the norming
term $\frac 1{\hat w_n\hat f_n}$ is omitted and the denominators are defined by
\begin{equation}\label{eq:est_variance_li_variant}
\hat v_i=n_i\hat p_{T}(1-\hat p_{T})+\frac{n_i^2}{k_i}\hat p_{C}(1-\hat p_{C})
\end{equation}
with $\hat p_C = \frac{a_C}k$. In comparison to (\ref{eq:est_variance_li}), the estimation of the variance is based on the whole sample ($p_{T}$ and $p_C$) and not only the subgroup specific parts $p_{T,i}$ and $p_{C,i}$. With the above arguments, it can be seen that under the
 regularity conditions mentioned in Remark~\ref{rem:newer_statistic}, this statistic also follows a $\chi^2$-distribution with one degree of freedom. However, the null hypothesis has to be expanded in order to include
$p_{C,1}=p_{C,2}$, i.e. the hypothesis of equal random noise. In the following we note this version as $\netold$.
\end{remark}

\section{Alternative approaches}\label{sec:alternative}

In this section we present two alternative methods for approaching the mentioned test scenarios.
The first one is the use of contrasts in a variance analytical context shown in~\cite{sasinstitute:2012},
the second is a test statistic from net scoring or uplift modelling as presented in~\cite{radcliffe:surry:2011}.

If for an individual observation response is coded as $0$ (= no response) and $1$ (= response), we can use variance analysis in order to decide our testing problem of equal uplifts.
We regard the group (target or control, subscript: G) as one factor and the two different campaings (or subgroups of one campaign, subscript: SG) as another factor. The empirical means of the four groups are just the estimated probabilities
$\hat p_{G,SG}$ for a response within the four groups. A classical variance analysis would now compare them in order to search for differences. However, we are interested
in the linear hypothesis of the theoretical group means $p_{T,1}-p_{C,1}=p_{T,2}-p_{C,2}$. Such linear hypothosis can be investigated by means of contrasts with the computation of appropriate
statistics and $p$-values, see Section~3.2 of~\cite{sasinstitute:2012}. However, the standard assumptions for variance analysis, namely independence of the observations,
normally distributed observations and equal variances in the groups (homoscedasticity), need to be fullfilled.

There are two reasons why we prefer our $\net$-method.
Firstly, ''response'' and ''no response'' refer to
a binary target variable and by aggregating over the observations we are in the area of count data, for which the $\chi^2$-family of statistics are
 especially constructed. Variance analysis is primarily aimed at
continuous target variables but under certain conditions can be applied to count data.

The second reason emerges from the assumptions of variance analysis. The assumption of independence is standard and is also required for $\net$. The assumption
of normal data can be relaxed when the samples sizes are large, which is usually the case in our marketing applications. However, the third assumption of homoscedasticity is critical in our view.
Since the number of responses in each group is binomially distributed, it can be approximated by a normal distribution by the law of large numbers.
Different probability parameters for the binomial distribution will automatically result in different variances, therefore spoiling the homoscedasticity condition,
 compare~(\ref{eq:variance_li}). Since even under the null hypothesis~(\ref{eq:null_hypothesis})
$p_{C,1}$ and $p_{C,2}$ are explicitly allowed to differ from each other, we have heteroscedasticity. Variance analysis is robust to heteroscedasticity when
sample sizes are equal, see Section~3.5 of~\cite{sasinstitute:2012}. However, in marketing applications the control group is usually much smaller than the target group
(e.g. $10\%$). Also, Section~3.5 of~\cite{sasinstitute:2012} shows that heteroscedasticity with unequal sample sizes can lead to
an increased type~I and type~II error rate. Thus, we prefer our method which does not suffer from such defects.

The second alternative method is also inspired by a general linear model in combination with a regression. It is described in detail in Section~6.2 of~\cite{radcliffe:surry:2011}.
We will give the formulae here with our notations. Define norming terms by

\begin{eqnarray*}
C_{44} &:=& \frac 1{n_1} + \frac 1{n_2} + \frac 1{k_1} + \frac 1{k_2}\\
SSE &:=& n_1\hat p_{T,1}(1-\hat p_{T,1}) +  n_2\hat p_{T,2}(1-\hat p_{T,2}) +  k_1\hat p_{C,1}(1-\hat p_{C,1}) +  k_2\hat p_{C,2}(1-\hat p_{C,2})
\end{eqnarray*}
and the statistic by
\begin{eqnarray}
\tnet &:=& \frac{(n+k-4)(\hat p_{T,1}-\hat p_{C,1}-(\hat p_{T,2}-\hat p_{C,2}))^2}{C_{44}\cdot SSE}
\end{eqnarray}

It has the notation $\tnet$ since it is implied (although neither explicitly stated nor proved) in~\cite{radcliffe:surry:2011} that
$t_{{\rm net}}$ follows asymptotically a $t$-distribution with $n+k-4$ degrees of freedom. Since $n+k-4$ is quite large in our applications, the $t$-distribution
can be approximated by the standard normal distribution. Thus, $\tnet$ also follows asymptotically a $\chi^2$-distribution with one degree of freedom. In the simulations
that follow in the next section, we will see that this statement seems to be true when testing for the right null hypothesis.

\section{Simulation study}\label{sec:simulation}

In this section, we use a simulation study in order to compare the five approaches presented above:

\begin{itemize}
\item[1.] $\net$ with $p$-value $p_{\net}$
\item[2.] $\netnewer$ with modifications from Remark~\ref{rem:newer_statistic} with $p$-value $p_{\netnewer}$
\item[3.] $\netold$ with modifications from Remark~\ref{rem:old_statistic} with $p$-value $p_{\netold}$
\item[4.] the contrast approach with $p$-value $p_{\rm con}$
\item[5.] $\tnet$ with $p$-value $p_{\tnet}$
\end{itemize}

For each of the following simulations we consider a fixed set of numbers $n_1$, $n_2$, $k_1$, $k_2$ and probabilities $p_{T,1}$, $p_{T,2}$, $p_{C,1}$, $p_{C,2}$.
The $a_{T,1}$, $a_{T,2}$, $a_{C,1}$ and $a_{C,2}$ are binomial $B(n_1,p_{T,1})$-, $B(n_2,p_{T,2})$-, $B(k_1,p_{C,1})$- and $B(k_2,p_{C,2})$-distributed random variables.
With these our basic stochastic model as shown in Section~\ref{sec:modified chi square} is completely described.
Being binomially distributed, $a_{G,SG}$ are easily simulated with any standard statistical software package. Since
all five statistics and $p$-values above are functions of the $n_i$, $k_i$ and $a_{G,SG}$, they and their
corresponding $p$-values can be computed. For a fixed set of parameters, we repeat this $b$ times, usually choosing $b=100$. We then sort the corresponding
data set by the $p$-values $p_{\net}$ and denote the resulting numbers by $p_{\net}^{i:b}$, $p_{\netnewer}^{i:b}$, $p_{\netold}^{i:b}$, $p_{\rm con}^{i:b}$, $p_{\tnet}^{i:b}$, $i=1,\ldots, b$.
Remark that only the $p_{\net}^{i:b}$ are necessarily in ascending order for $i=1,\ldots, b$.
We then plot each of the five $p$-value series against the set of $\frac ib$, $i=1,\ldots, b$, i.e. the points $\left(\frac ib, p_x^{i:b}\right)$ for
$x=\net,\netnewer,\netold,{\rm con},\tnet$. If the null hypothesis behind each test is fulfilled,
the $p$-values follow a uniform distribution on $(0,1)$ and the plotted points scatter around the diagonal in this probability plot. However, if the null hypothesis is not fulfilled,
we will find deviations from the diagonal.

Table~\ref{tab:simulationsstruktur} shows the parameter values behind the results in Figures~\ref{fig:bild1} to~\ref{fig:bild7}.

\begin{table}[htbp]
\centering
\begin{tabular}{lccccccccc}\toprule
                          & aimed at          & $n_1$     & $n_2$        & $k_1$         & $k_2$   & $p_{T,1}$  & $p_{T,2}$   & $p_{C,1}$   & $p_{C,2}$ \\
                          & error             &           &              &               &         &            &             &             &            \\\colrule
  Figure~\ref{fig:bild1}  & type I            & $50,000$  & $50,000$     & $5,000$       & $5,000$ & $10\%$     & $10\%$      & $9\%$       & $9\%$      \\
  Figure~\ref{fig:bild2}  & type I            & $50,000$  & $50,000$     & $5,000$       & $5,000$ & $5\%$      & $51\%$      & $4\%$       & $50\%$      \\
  Figure~\ref{fig:bild3}  & type I            & $100,000$ & $20,000$     & $10,000$      & $2,000$ & $5\%$      & $51\%$      & $4\%$       & $50\%$      \\
  Figure~\ref{fig:bild4}  & type I            & $100,000$ & $20,000$     & $10,000$      & $2,000$ & $51\%$     & $5\%$       & $50\%$      & $4\%$      \\
  Figure~\ref{fig:bild5}  & type II           & $50,000$  & $50,000$     & $5,000$       & $5,000$ & $11\%$     & $10\%$      & $9\%$       & $9\%$      \\
  Figure~\ref{fig:bild6}  & type II           & $50,000$  & $50,000$     & $5,000$       & $5,000$ & $6\%$      & $51\%$      & $4\%$       & $50\%$      \\
  Figure~\ref{fig:bild7}  & type II           & $50,000$  & $50,000$     & $5,000$       & $10,000$& $5\%$      & $52\%$      & $4\%$       & $50\%$      \\ \botrule
 \end{tabular}
\caption{Parameters used for the seven simulations.}
\label{tab:simulationsstruktur}
\end{table}

The control groups were chosen to be roughly $10\%$ of the target group which is a quite common target-control rate. Also, the absolute group sizes are not unusual
in practice.

In general, all figures show one fact: there are very small differences between $p_{\rm con}$ and $p_{\tnet}$. Thus, in essence, although not clear from the description
in Section~\ref{sec:alternative} they seem to be the same method. This means that all the criticism stated for $p_{\rm con}$ carries over to $p_{\tnet}$.
The points of
criticism are supported by the simulations.

In Figures~\ref{fig:bild1} to~\ref{fig:bild4} we investigate if the tests control the type~I error, since the uplift in both subgroups is $1\%$. We vary the
rate between the target groups and the levels of the random noise in the subgroups.
The results are as follows:
\begin{itemize}
\item When both target groups have roughly the same size and the random noise is equal, all methods deliver nearly the same results and maintain the type~I error (Figure~\ref{fig:bild1}).
\item When target group sizes are the same, however random noises are on a different scale, $\netold$ gives slightly different results, however, the type~I
      error is still kept by all methods (Figure~\ref{fig:bild2}).
\item When target group sizes differ and the smaller group has the larger random noise, only $\net$ and $\netnewer$ are able to keep the type~I error.
      $\tnet$ has the smallest $p$-values and is no reliable in this case (Figure~\ref{fig:bild3}).
\item When target groups differ heavily and the larger group has the larger random noise, only $\net$ and $\netnewer$ are able to control the type~I error.
      The other methods have $p$-values too high and are not reliable tests here (Figure~\ref{fig:bild4}).
\end{itemize}

\begin{figure}
\centering
\includegraphics[width=0.95\textwidth]{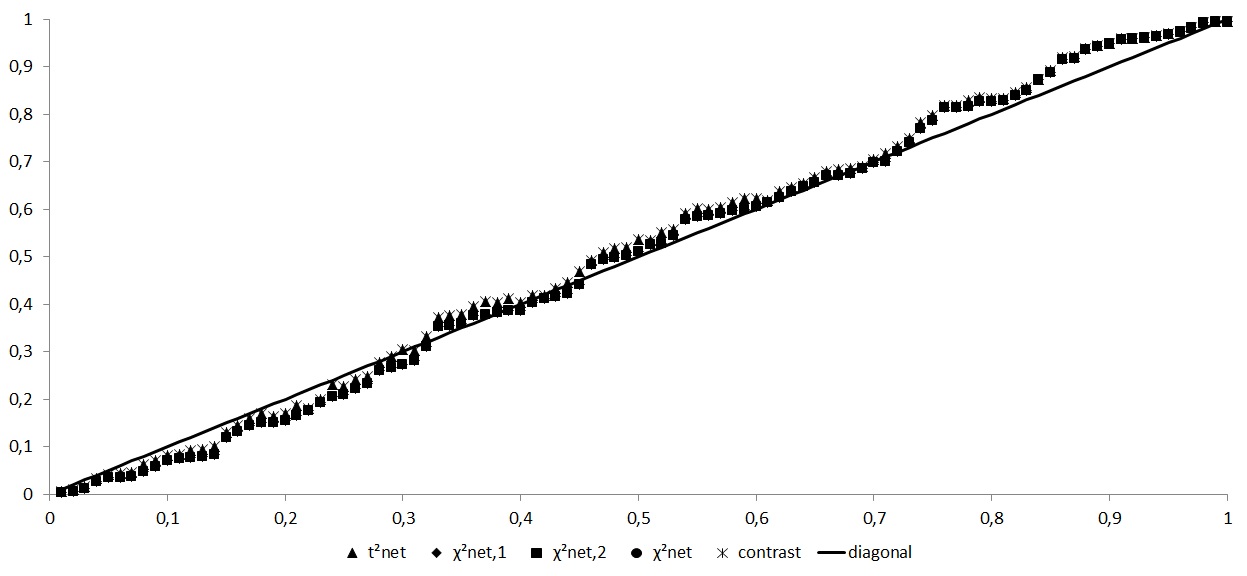}
\caption{Check for type~I error with equal group sizes and equal random noise.}
\label{fig:bild1}
\end{figure}

\begin{figure}
\centering
\includegraphics[width=0.95\textwidth]{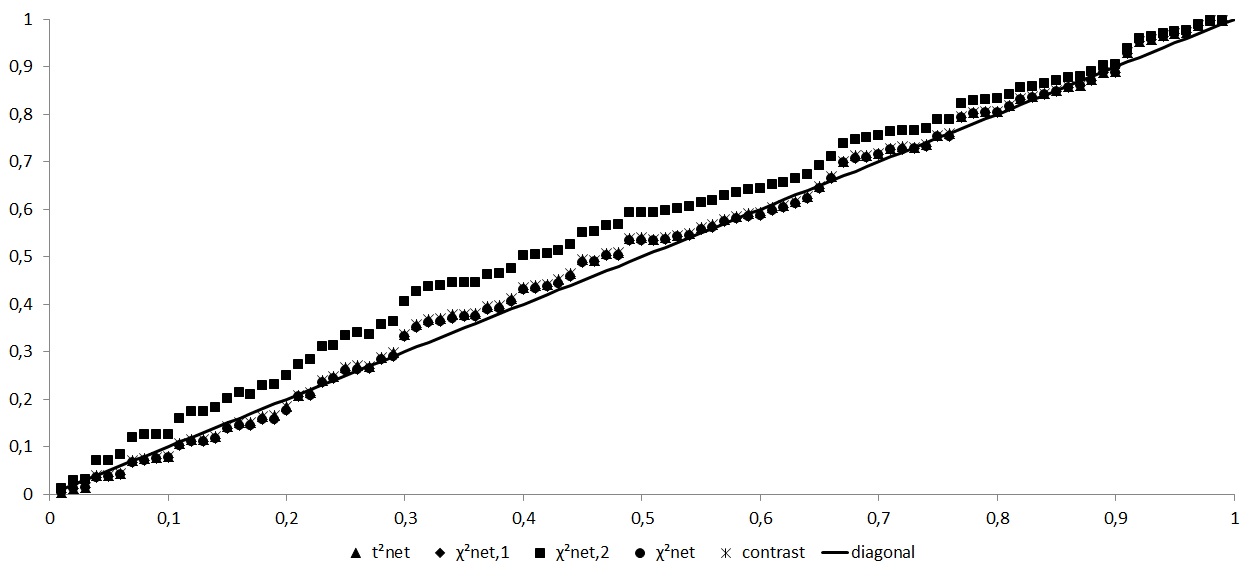}
\caption{Check for type~I error with equal group sizes and different random noise.}
\label{fig:bild2}
\end{figure}

\begin{figure}
\centering
\includegraphics[width=0.95\textwidth]{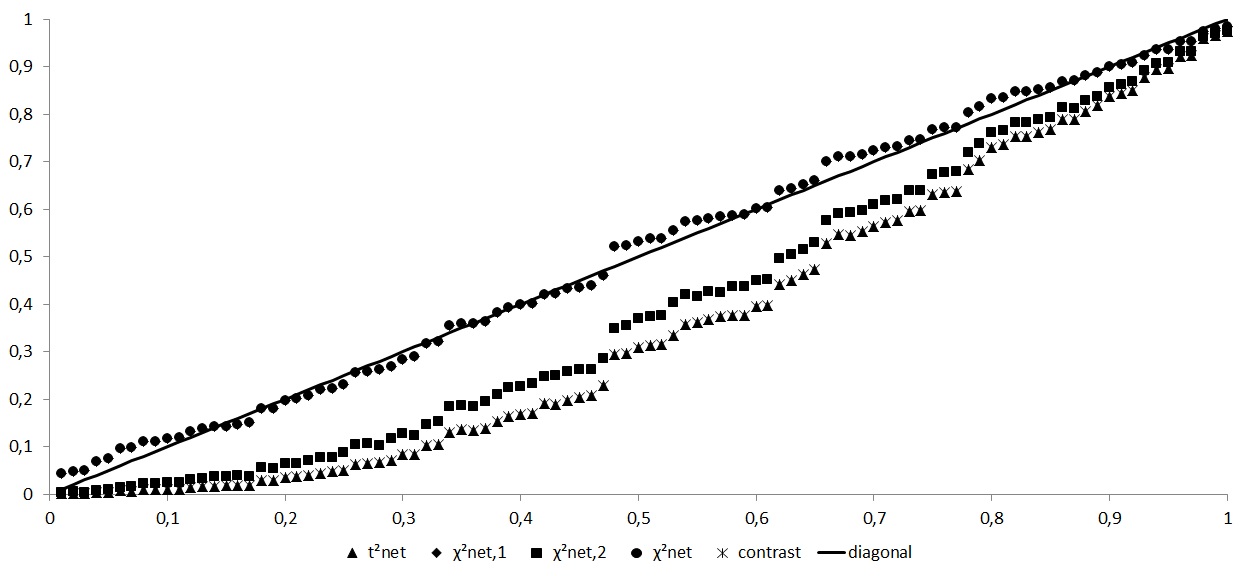}
\caption{Check for type~I error with different group sizes and different random noise (small group with large noise).}
\label{fig:bild3}
\end{figure}

\begin{figure}
\centering
\includegraphics[width=0.95\textwidth]{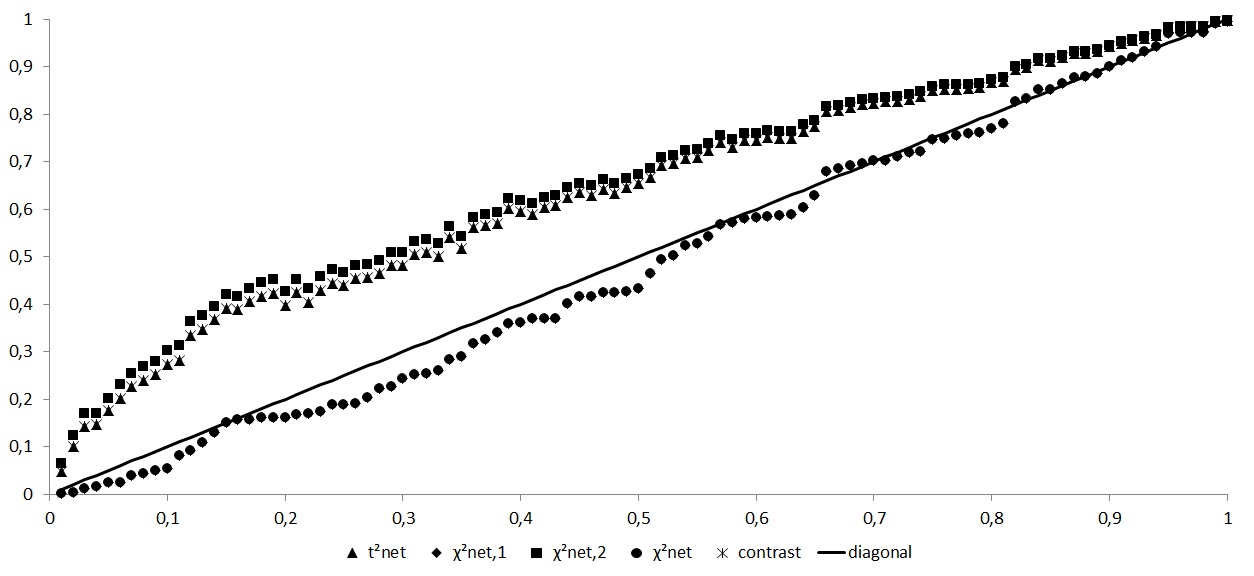}
\caption{Check for type~I error with different group sizes and different random noise (large group with large noise).}
\label{fig:bild4}
\end{figure}

In Figures~\ref{fig:bild5} to~\ref{fig:bild7}, we investigate the type~II error with an uplift of $2\%$ in one subgroup and of $1\%$ in the other subgroup.
Further, we only look at the case of equal target group sizes, since in the other case only $\net$ and $\netnewer$
can reasonably be used by the above results. Here we see:
\begin{itemize}
\item When random noise is roughly the same size, all methods detect deviations from the null hypothesis with $\net$ showing the smallest $p$-values (Figure~\ref{fig:bild5}).
\item When random noises are on a different scale, $\netold$ has a notably larger type~II error, the others are roughly equal (Figure~\ref{fig:bild6}).
\item When the target-control rate differs between the subgroups, $\netnewer$ and $\netold$ are useless and not shown since they were not constructed for this case. Of
the remaining methods, $\net$ is clearly the best one with the smallest type~II error (Figure~\ref{fig:bild7}).
\end{itemize}

\begin{figure}
\centering
\includegraphics[width=0.95\textwidth]{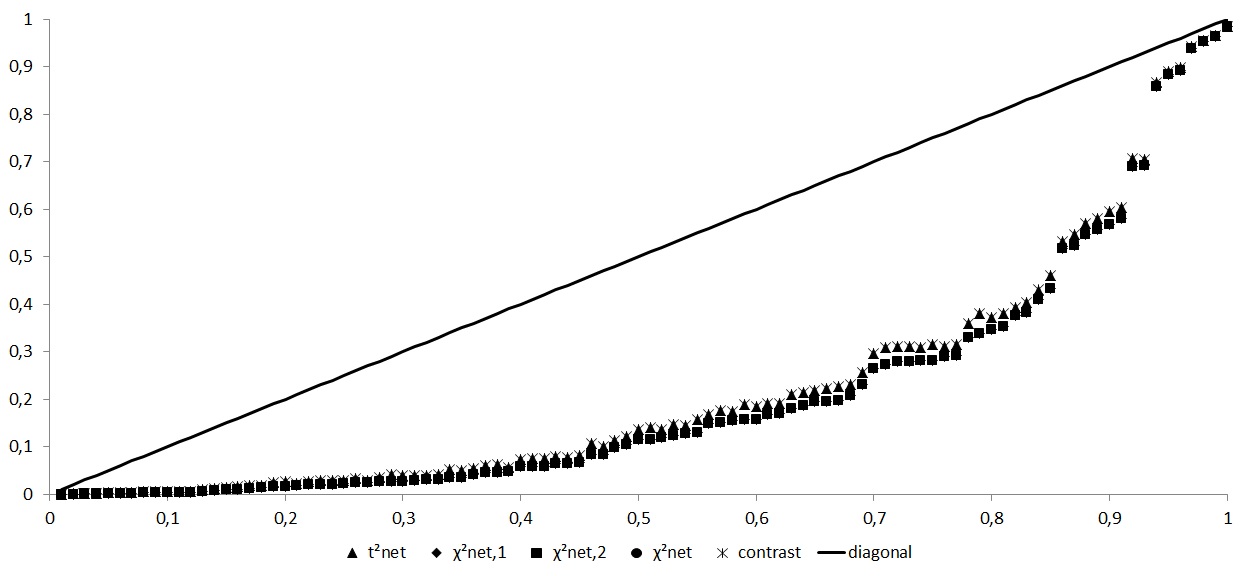}
\caption{Check for type~II error with equal group sizes and equal random noise.}
\label{fig:bild5}
\end{figure}

\begin{figure}
\centering
\includegraphics[width=0.95\textwidth]{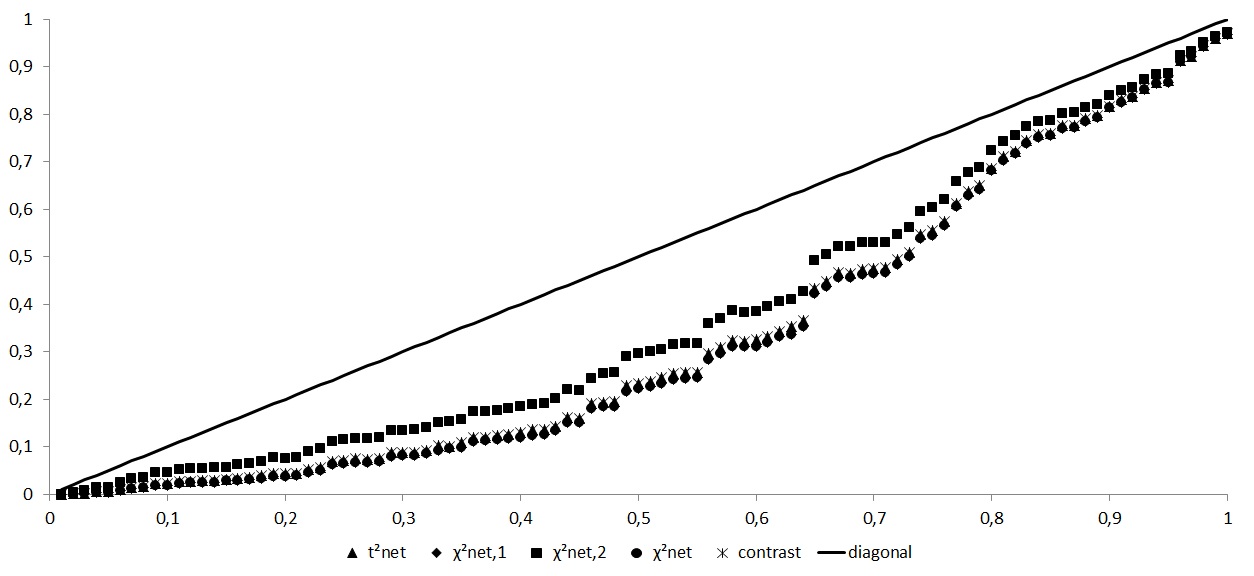}
\caption{Check for type~II error with equal group sizes and different random noise.}
\label{fig:bild6}
\end{figure}

\begin{figure}
\centering
\includegraphics[width=0.95\textwidth]{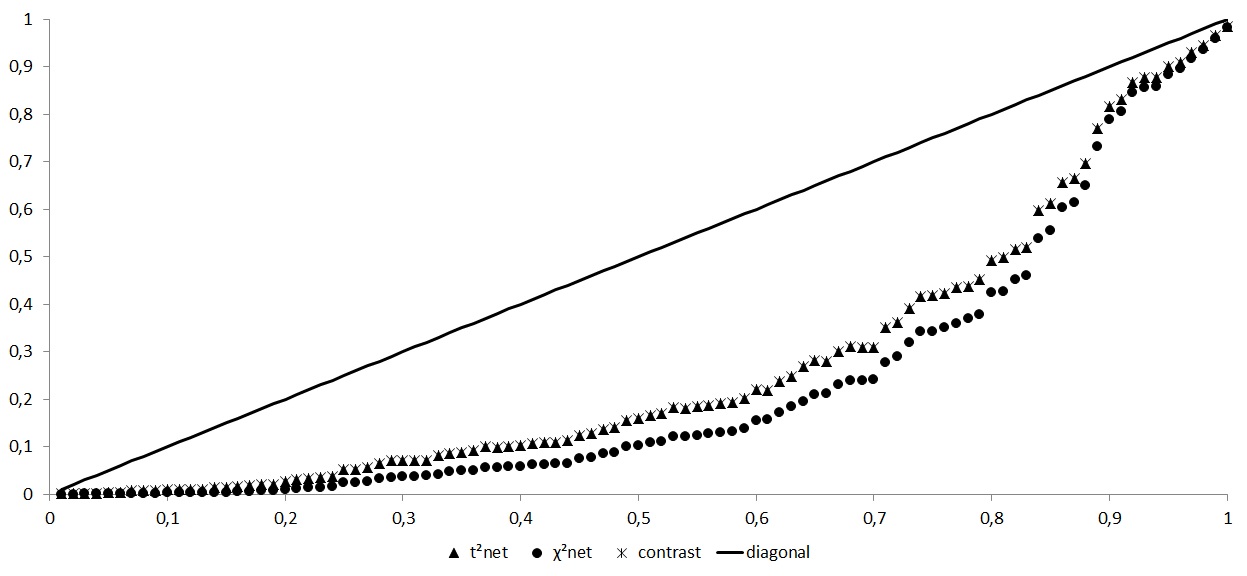}
\caption{Check for type~II error with equal group sizes and different random noise under different target-control rate.}
\label{fig:bild7}
\end{figure}

The results of the simulations are quite clear: $\net$ should be the method of choice when testing for equal uplifts. It seems that $\tnet$ and the contrast method
not only test for the null hypothesis of equal uplifts but additionally assume equal random noise, as does $\netold$, see Remark~\ref{rem:old_statistic}.
A mathematical investigation and comparison of the type~II error rates of all methods, but especially
$\net$ and $\tnet$, is still an open issue.

\section{Application to real data}\label{sec:application}

We next want to apply the statistics to real
world data, in particular covering both scenarios from Section~\ref{sec:test scenarios}, i.e. one campaign is split up or two different campaigns are joint
together.

A campaign was done by a financial institute with the aim of generating appointments with regard to provision for one's old age. For that purpose, $167$ thousand customers
were selected to receive a letter and a phone call inviting them to make an appointment with their bank advisor in order to discuss their hedging situation in old age.
A response was counted if an appointment was made. A control group of $13$ thousand customers was created.
Accordingly, appointments were counted as responses.

The first thing of interest for the bank was if gender is a decisive factor in response to the campaign. The bank had the hypothesis that women are more affine
to provision for old age since they tend to think more about saving for the future than men who are supposed to spend money more quickly.

Table~\ref{tab:gendercomparison} shows the results of the campaign, seperated by gender.

\begin{table}[htbp]
\centering
\begin{tabular}{lcccccccc}\toprule
  campaign~1  & \multicolumn{3}{c}{target}& & \multicolumn{3}{c}{control} & uplift\\    \colrule
              & persons & responses & rate && persons  & responses & rate &  \\\colrule
  women       & $81,770$  & $5,656$     & $6.92\%$       && $6,391$   & $373$        & $5.84\%$       & $1.08\%$ \\
  men         & $85,257$  & $6,231$      & $7.31\%$       && $6,699$    & $443$        & $6.61\%$       & $0.70\%$ \\ \botrule
 \end{tabular}
\caption{Comparison of campaign results by gender.}
\label{tab:gendercomparison}
\end{table}

The results seem to confirm the stronger affinity of women to provision, since women had an uplift of $1.08\%$ and men one of $0.70\%$.
A classical $\chi^2$-test for homogenity shows for men and women that the response rates in target and control group are significantly
different from each other in both groups, i.e. the uplifts are significantly different from $0$.
The question is,
however, if the uplifts are statistically different from each other. In order to answer this question, we compute $\net=0.7643$ and the corresponding $p$-value $p_{\net}=0.3820$. Thus, in this campaign,
the uplifts of women and men are not significantly different and the hypothesis above cannot be confirmed. Since we are in Scenario~2
($\frac {n_1}{k_1}=\frac{81,770}{6,391}\approx 12.79\approx 12.73\approx\frac{85,257}{6,699}=\frac {n_2}{k_2}$), the two alternatives from
Remarks~\ref{rem:newer_statistic} and~\ref{rem:old_statistic} can also be computed. They result in $\netnewer=0.7648$
and  $\netold=0.7622$  with corresponding $p$-values of $p_{\netnewer}=0.3818$ and $p_{\netold}=0.3827$.
Thus, the alternatives lead to almost exactly the same results. The approach by means of contrasts generates the $p$-value $p_{\rm con}=0.4075$, i.e. no significance. The statistic
$\tnet=0.6861$ with the same $p$-value of $p_{\tnet}=0.4075$ leads to the same conclusion.

Parallel to Campaign~1, another campaign was carried out which had the same objective (to get customers to make an appointment with their bank advisor to talk about
old age provision) and the same means (firstly a letter was sent and afterwards, some of the customers were called), however, the letter had a different layout and text.
Also, the customers were different since the second campaign
aimed at customers with higher income. Was the effort more successful for wealthier clients? Table~\ref{tab:campaigncomparison} shows the results.

\begin{table}[htbp]
\centering
\begin{tabular}{lcccccccc}\toprule
                 & \multicolumn{3}{c}{target}& & \multicolumn{3}{c}{control} & uplift\\    \colrule
                 & persons & responses & rate && persons  & responses & rate &  \\\colrule
  campaign~1     & $167,027$ & $11,887$     & $7.12\%$       && $13,090$   & $816$        & $6.23\%$       & $0.88\%$ \\
  campaign~2     & $44,356$  & $3,447$      & $7.77\%$       && $7,987$    & $492$        & $6.16\%$       & $1.61\%$ \\ \botrule
 \end{tabular}
\caption{Comparison of results for parallel campaigns aimed at different customer segments.}
\label{tab:campaigncomparison}
\end{table}

A classical $\chi^2$-test once again shows that the campaigns themselves were successful since both uplifts are significantly different from $0$.
When comparing the uplifts to each other, the uplift for the wealthier clients is $1.61\%$ which is above the uplift of $0.88\%$ for the middle-class clients.
In order to check for statistical significance, we compute $\net=3.8661$ and $p_{\net}=0.0493$ which is significant at the usual $5\%$-level. Thus,
there is evidence that a campaign with the above targets is more effective for customers with higher income.
Since $\frac {n_1}{k_1}=\frac{167,027}{13,090}\approx 12.76\gg 5.55\approx\frac{44,356}{7,987}=\frac {n_2}{k_2}$, we are in Scenario~1 and $\netnewer$
and $\netold$ cannot be applied.

However, the alternative approach by means of contrasts can be applied. The $p$-value here is $p_{\rm con}=0.0626$ and, thus, not significant, although close
to significant.
$\tnet$-statistic gives the value of $3.4672$ with a $p$-value of $p_{\tnet}=0.0626$, thus showing no significance in contrast to $\net$.
This is another indication that the statistical power of the $\tnet$ or the contrast method is lower than that of $\net$.
This example too shows that both methods ($\tnet$ and contrast) coincide at least quantitatively.

\section{Discussion and outlook}\label{sec:discussion}

In this article, we have presented a new statistic, based on the classical $\chi^2$ which is appropriate when statistically comparing the uplift of two campaigns.
We have proved that its asymptotic distribution is a $\chi^2$-distribution with one degree of freedom and shown its practical applicability by using it on real data to
decide a real life problem. We have also shown by means of simulation that it seems to be superior to the already existing alternative approaches by fixing the type~I error and showing
smaller type~II errors.

However, an open issue remains in the comparison of the method presented here with the alternative ones by mathematical means.

The statistic $\net$ has been presented here for the two sample (campaign) case. A generalization of the formula of $\net$ to the $j>2$ sample case seems straightforward by

\begin{equation*}\label{eq:multivariate_net_chi_square}
\net:=\frac{1}{\hat w_n\hat f_n}\sum_{i=1}^j\frac{(l_i-\hat e_i)^2}{\hat v_i}
\end{equation*}

However, the definition of suitable norming terms $\hat w_n$ and $\hat f_n$ is yet unclear. The conjecture is, of course, that
this generalized version is asymptotically distributed as the $\chi^2$ distribution with $j-1$ degrees of freedom. A mathematical assessment of that
assertion still needs to be done.

For the classical $\chi^2$-statistic, rules of thumb are known when the approximative distribution is valid (e.g. the expected frequency in each
cell must be larger than 5, i.e. $np_{i} >5$, see Section 4.2 of~\cite{falk:2003}).
Such rules are still missing in the $\net$-case.

Besides its applications to testing problems in marketing performance measurement, this statistic can also be used a scoring context, the so called net scoring.
Especially its application to the construction of decision trees was described in~\cite{michel:schnakenburg:martens:2013}.

However, marketing is not the only area in which our statistic can be applied. It is useful in all testing scenarios where the effect of a treatment
is investigated under the condition that the desired result could also appear by itself.
The medical application seems to suggest itself since typically the impact of different drugs is compared well-knowing that a certain percentage of patients
will recover even without treatment.
 Examples of the net effect in medicine are shown in~\cite{jaskowski:jaroszewicz:2012}
and~\cite{nassif:2013}. We hope this article stipulates research in this area and the discovery of many more areas of application.

\section*{Acknowledgements}

The authors would like to thank Hans Fischer, Michael Falk and Johannes Hain for their support on
this work as well as Altran GmbH \& Co. KG.

\nocite{hansotia:rukstales:2002}
\nocite{rzepakowski:jaroszewicz:2011}

\bibliographystyle{gSTA}
\bibliography{bibliography_net}

\end{document}